\documentclass[10pt]{article}
 \textwidth= 5.00in
 \textheight= 7.4in
 \topmargin = 30pt
 \evensidemargin=0pt
 \oddsidemargin=55pt
 \headsep=17pt
 \parskip=.5pt
 \parindent=12pt
 \font\smallit=cmti10

\usepackage{amssymb,latexsym,amsmath,epsfig,amsthm,color,algorithm,algorithmic} 

\makeatletter

\renewcommand{\@seccntformat}[1]{\csname the#1\endcsname. }

\makeatother

 \newtheorem{theorem}{Theorem}[section]
 \newtheorem{lemma}[theorem]{Lemma}
 
 \newtheorem{proposition}[theorem]{Proposition}
 
 \newtheorem{definition}[theorem]{Definition}


\begin{document}
\begin{center}
 {\bf Notes on the LVP and CVP in $p$-adic Fields}
 \vskip 30pt

 {\bf Chi Zhang and Mingqian Yao}\\
 {\smallit State Key Laboratory of Mathematical Sciences, Academy of Mathematics and Systems Science, Chinese Academy of Sciences, Beijing 100190, People's Republic of China}\\
 {and}\\
 {\smallit School of Mathematical Sciences, University of Chinese Academy of Sciences, Beijing 100049, People's Republic of China}\\

 \vskip 10pt

 {\tt zhangchi171@mails.ucas.ac.cn, yaomingqian@amss.ac.cn}\\

 \end{center}
 \vskip 30pt

\centerline{\bf Abstract}

This paper explores computational methods for solving the Longest Vector Problem (LVP) and Closest Vector Problem (CVP) in $p$-adic fields. Leveraging the non-Archimedean property of $p$-adic norms, we propose a polynomial time algorithm to compute orthogonal bases for $p$-adic lattices when the $p$-adic field is given by a minimal polynomial. The method utilizes the structure of maximal orders and $p$-radicals in extension fields of $\mathbb{Q}_{p}$ to efficiently construct uniformizers and residue field bases, enabling rapid solutions for the LVP and CVP. In addition, we introduce the characterization of norms on vector spaces over $\mathbb{Q}_p$.\par
2010 Mathematics Subject Classification: Primary 11F85, Secondary 94A60.\par
Key words and phrases: $p$-adic fields, orthogonal basis, longest vector problem (LVP), closest vector problem (CVP), maximal order.

\noindent

\pagestyle{myheadings}

 \thispagestyle{empty}
 \baselineskip=12.875pt
 \vskip 20pt

\section{Introduction}
The study of computational problems in non-Archimedean geometry, particularly over $p$-adic fields, has emerged as a significant area of intersection between algebraic number theory and theoretical cryptography. The Shortest Vector Problem (SVP) and the Closest Vector Problem (CVP), central to lattice-based hardness assumptions in Euclidean spaces, extend naturally to $p$-adic vector spaces, where the non-Archimedean norm structure $|\cdot|_p$ (satisfying $|v + w|_p \leq  \max\{|v|_p, |w|_p\}$) enables qualitatively distinct geometric decompositions. These generalizations to $p$-adic vector spaces are the Longest Vector Problem (LVP) and the Closest Vector Problem (CVP). They are first introduced by Deng et al. \cite{ref-2} in 2018. These new problems are considered to be hard and potentially applicable for constructing public-key encryption cryptosystems and signature schemes. In 2021, by introducing a trapdoor function with an orthogonal basis of a $p$-adic lattice, Deng et al. \cite{ref-3.5} constructed the first signature scheme and public-key encryption cryptosystem based on $p$-adic lattices.\par

The experimental results demonstrated that the new schemes achieve good efficiency. As for security, they did not give formal proofs for these schemes since the problems in local fields are relatively very new. Recently, much research has been done to analyze security of these schemes. In 2025, Zhang \cite{ref-10} improved the LVP algorithm in local fields. The modified LVP algorithm is a deterministic polynomial time algorithm when the field is totally ramified and $p$ is a polynomial in the rank of the input lattice, hence can be utilized to break the schemes. This attack extremely reduced the security strength of the schemes. In order to avoid this attack, Zhang et al. \cite{ref-11} proposed a method to construct orthogonal bases in $p$-adic fields with a large residue degree, which is helpful to modify the $p$-adic signature scheme and public-key encryption cryptosystem.\par

Nevertheless, these previous efforts ignored an essential aspect that given a minimal polynomial of a local field, there are polynomial time algorithms to compute the maximal order of this field, such as the Round 2 Algorithm \cite{ref-1} and the Round 4 Algorithm \cite{ref-12}. Besides, unlike their real or complex counterparts, $p$-adic lattices admit orthogonal bases, see \cite{ref-9}. Orthogonal bases of $p$-adic lattices can be used to solve the LVP and CVP in polynomial time \cite{ref-3.5, ref-9}, and can be derived from maximal orders and uniformizers in $p$-adic fields. Combining together, they provide a framework to efficiently solve the LVP and CVP in $p$-adic fields.

Recent advancements in $p$-adic computational algebra have highlighted the role of $p$-radicals and residue field extensions in constructing orthogonal decompositions, yet algorithms for the general LVP or CVP instances remain underexplored. Existing deterministic methods for Euclidean lattices (e.g., LLL reduction) do not directly generalize due to the ultrametric inequality. We addresses this gap by presenting a polynomial time algorithm to compute orthogonal bases for $p$-adic lattices, thereby reducing the LVP and CVP to tractable subproblems over orthogonal subspaces.

Our contributions are :

\begin{enumerate}

\item Algorithmic Innovation: By integrating $p$-adic Gram-Schmidt orthogonalization with residue field computations, we design efficient solvers for the LVP and CVP in $p$-adic fields, proving their polynomial time complexity under standard assumptions.

\item Practical Validation: A concrete example over $\mathbb{Q}_{2}(\sqrt{5})$ demonstrates the algorithm’s feasibility in computing orthogonal bases and solving lattice problems.

\end{enumerate}

The paper is structured as follows: Section 2 reviews $p$-adic norms, local fields, and lattice definitions. Section 3 introduces maximal orders and $p$-radicals as tools for enlarging subrings. Section 4 proves the existence of orthogonal bases via uniformizers and residue field extensions. Section 5 details the LVP and CVP algorithms, while Section 6 provides an example. Section 7 introduces the characterization of norms on vector spaces over $\mathbb{Q}_p$.

By unifying algebraic techniques with computational efficiency, this work advances the theoretical understanding of $p$-adic lattice problems, with implications for post-quantum cryptography and the arithmetic of $p$-adic extensions.

\section{Preliminaries}\label{se-2}

\subsection{Norm and Orthogonal Basis}

Let $p$ be a prime. Let $V$ be a vector space over $\mathbb{Q}_p$. A norm on $V$ is a function
$$|\cdot|:V\rightarrow\mathbb{R}$$
such that

\begin{enumerate}
\item $\left|v\right|\ge0$ for any $v\in V$, and $\left|v\right|=0$ if and only if $v=0$;
\item $\left|xv\right|=\left|x\right|_{p}\cdot \left|v\right|$ for any $x\in\mathbb{Q}_p$ and $v\in V$;
\item $\left|v+w\right|\le\max{\left\{\left|v\right|,\left|w\right|\right\}}$ for any $v,w\in V$.
\end{enumerate}

If $|\cdot|$ is a norm on $V$, and if $\left|v\right|\ne \left|w\right|$ for $v,w\in V$, then we must have $\left|v+w\right|=\max{\{\left|v\right|,\left|w\right|\}}$. Weil (\cite{ref-6} page 26) proved the following proposition.

\begin{proposition}[\cite{ref-6}]\label{pr-2.1}
Let $V$ be a vector space over $\mathbb{Q}_p$ of finite dimension $n>0$, and let $\left|\cdot\right|$ be a norm on $V$. Then there is a decomposition $V=V_1+V_2+\cdots+V_n$ of $V$ into a direct sum of subspaces $V_i$ of dimension $1$, such that
$$\left|\sum_{i=1}^{n}{v_i}\right|=\max_{1\le i\le n}{\left|v_i\right|}$$
for any $v_i\in V_i$, $i=1,2,\dots,n$.
\end{proposition}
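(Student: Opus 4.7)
The plan is to proceed by induction on $n = \dim V$. The base case $n=1$ is trivial, taking $V_1 = V$.

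For the inductive step with $\dim V = n$, I would pick any codimension-one subspace $W \subset V$ and apply the induction hypothesis to obtain $W = V_1 \oplus \cdots \oplus V_{n-1}$ satisfying the max-norm identity. Then choose any $v_0 \in V \setminus W$ and try to find $v' \in v_0 + W$ whose norm equals $d := \inf_{w \in W}|v_0 - w|$. Once such a $v'$ is in hand, I would set $V_n = \mathbb{Q}_p v'$ and verify the orthogonality property for the full decomposition $V = W \oplus V_n$.

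The crucial step, and the main obstacle, is showing that $d$ is attained. First, the induction hypothesis identifies $W$ (via the decomposition $W = \bigoplus \mathbb{Q}_p e_i$) with $\mathbb{Q}_p^{n-1}$ in a manner under which the norm is equivalent to the max-norm; hence $W$ is complete, therefore closed in $V$, so $v_0 \notin W$ gives $d > 0$. To exhibit a minimizer, I observe that if $w \in W$ satisfies $|w| > 2|v_0|$, then the strict-triangle remark recalled just before the proposition yields $|v_0 - w| = |w| > 2|v_0| \geq 2d$, so the infimum is attained over the smaller set $K = \{w \in W : |w| \leq 2|v_0|\}$. Under the identification with $\mathbb{Q}_p^{n-1}$, $K$ corresponds to a bounded closed box, which is compact because $\mathbb{Z}_p$ is. Continuity of $w \mapsto |v_0 - w|$ on the compact set $K$ then delivers a minimizer $w_0$, and $v' := v_0 - w_0$ satisfies $|v'| = d$.

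Finally I would verify $|\alpha v' + w| = \max(|\alpha v'|, |w|)$ for all $\alpha \in \mathbb{Q}_p$ and $w \in W$. For $\alpha \neq 0$, writing $\alpha v' + w = \alpha(v_0 - (w_0 - \alpha^{-1}w))$ and using $w_0 - \alpha^{-1}w \in W$ gives $|\alpha v' + w| \geq |\alpha|_p \cdot d = |\alpha v'|$; combining this with the ultrametric upper bound $|\alpha v' + w| \leq \max(|\alpha v'|, |w|)$ and splitting into the cases $|\alpha v'| > |w|$, $<$, or $=$ (invoking the strict-triangle remark in the unequal cases and the lower bound in the equal case) forces equality. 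Substituting the inductive expression $|w| = \max_{1 \leq i \leq n-1} |w_i|$ then produces the desired identity, closing the induction.
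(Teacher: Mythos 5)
Your proof is correct. Note that the paper itself does not prove this proposition; it simply cites Weil (\cite{ref-6}, page 26), so there is no in-paper argument to compare against. Your induction --- decompose a hyperplane $W$ orthogonally, minimize $|v_0-w|$ over the coset $v_0+W$, and check that a minimizer $v'$ is orthogonal to $W$ via $|\alpha v'+w|\ge |\alpha|_p\, d$ --- is essentially the classical argument. The one place you diverge from Weil is in how the infimum $d=\inf_{w\in W}|v_0-w|$ is shown to be attained: Weil exploits the discreteness of the set of norm values (they lie in finitely many cosets of $|\mathbb{Q}_p^\times|=p^{\mathbb{Z}}$, so a bounded-below infimum of norms is a minimum), whereas you restrict to the ball $\{w\in W: |w|\le 2|v_0|\}$, identify it with a compact box via the inductive orthogonal basis, and invoke continuity. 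Both routes work; the compactness route needs the local compactness of $\mathbb{Q}_p$ and the completeness/closedness of $W$ (which you correctly supply), while the discreteness route is slightly more robust and is what makes the statement go through for general $p$-fields as in Weil. All the delicate points --- $d>0$ from closedness of $W$, the lower bound $|\alpha v'+w|\ge|\alpha v'|$ obtained by rewriting $\alpha v'+w$ as $\alpha$ times an element of $v_0+W$, and the three-case analysis combining it with the ultrametric upper bound --- are handled correctly.
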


Weil proved the above proposition for finite-dimensional vector spaces over a $p$-field (commutative or not). For simplicity, we only consider the case $\mathbb{Q}_p$. Thus, we can define the orthogonal basis.

\begin{definition}[orthogonal basis \cite{ref-6}]
Let $V$ be a vector space over $\mathbb{Q}_p$ of finite dimension $n>0$, and let $|\cdot|$ be a norm on $V$. We call $\alpha_1,\alpha_2,\dots,\alpha_n$ an orthogonal basis of $V$ over $\mathbb{Q}_p$ if $V$ can be decomposed into the direct sum of $n$ $1$-dimensional subspaces $V_i={\rm Span}_{\mathbb{Q}_p}(\alpha_i)$ $(1\le i\le n)$, such that
$$\left|\sum_{i=1}^{n}{v_i}\right|=\max_{1\le i\le n}{\left|v_i\right|}$$
for any $v_i\in V_i$, $i=1,2,\dots,n$.
Two subspaces $U$, $W$ of $V$ are said to be orthogonal if the sum $U+W$ is a direct sum and it holds that $\left|u+w\right|=\max\left\{\left|u\right|, \left|w\right|\right\}$ for all $u\in U$, $w\in W$.
\end{definition}

\subsection{Basic Facts about Local Fields}
In this subsection, we recall some basic facts about local fields. More details about local fields can be found in \cite{ref-4,ref-5}.\par
Let $p$ be a prime. For $x\in\mathbb{Q}$ with $x\ne0$, write $x=p^{t}\cdot\frac{a}{b}$ with $t,a,b\in\mathbb{Z}$ and $p\nmid ab$. Define $\left|x\right|_p=p^{-t}$ and $\left|0\right|_p=0$. Then $|\cdot|_p$ is a non-Archimedean absolute value on $\mathbb{Q}$. Namely, we have:

\begin{enumerate}
\item $\left|x\right|_p\ge0$ and $\left|x\right|_p=0$ if and only if $x=0$;
\item $\left|xy\right|_p=\left|x\right|_p\left|y\right|_p$;
\item $\left|x+y\right|_p\le\max{\left\{\left|x\right|_p,\left|y\right|_p\right\}}$. If $\left|x\right|_p\ne\left|y\right|_p$, then $\left|x+y\right|_p=\max{\left\{\left|x\right|_p,\left|y\right|_p\right\}}$.
\end{enumerate}

Let $\mathbb{Q}_p$ be the completion of $\mathbb{Q}$ with respect to $|\cdot|_p$. Denote
$$\mathbb{Z}_p=\left\{x\in\mathbb{Q}_p:\left|x\right|_p\le1\right\}.$$
We have
$$\mathbb{Q}_p=\left\{\sum_{i=j}^{\infty}a_ip^i:a_i\in\left\{0,1,2,\dots,p-1\right\},i\ge j,j\in\mathbb{Z}\right\},$$
and
$$\mathbb{Z}_p=\left\{\sum_{i=0}^{\infty}a_ip^i:a_i\in\left\{0,1,2,\dots,p-1\right\},i\ge 0\right\}.$$
$\mathbb{Z}_p$ is compact and $\mathbb{Q}_p$ is locally compact. $\mathbb{Z}_p$ is a discrete valuation ring, it has a unique nonzero principal maximal ideal $p\mathbb{Z}_p$ and $p$ is called a uniformizer of $\mathbb{Q}_p$. The unit group of $\mathbb{Z}_p$ is
$$\mathbb{Z}_p^{\times}= \left\{x\in\mathbb{Q}_p:\left|x\right|_p=1\right\}.$$
The residue class field $\mathbb{Z}_p/p\mathbb{Z}_p$ is a finite field with $p$ elements.\par
Let $n$ be a positive integer, and let $K$ be an extension field of $\mathbb{Q}_p$ of degree $n$. We fix some algebraic closure $\overline{\mathbb{Q}}_p$ of $\mathbb{Q}_p$ and view $K$ as a subfield of $\overline{\mathbb{Q}}_p$. Such $K$ exists. For example, let $K=\mathbb{Q}_p(\alpha)$ with $\alpha^n=p$. Because $X^n-p$ is an Eisenstein polynomial over $\mathbb{Q}_p$, it is irreducible over $\mathbb{Q}_p$, then $K$ has degree $n$ over $\mathbb{Q}_p$. The $p$-adic absolute value (or norm) $|\cdot|_p$ on $\mathbb{Q}_p$ can be extended uniquely to $K$, i.e., for $x\in K$, we have
$$\left|x\right|_p=\left| N_{K/\mathbb{Q}_p}(x)\right|_p^{\frac{1}{n}},$$
where $N_{K/\mathbb{Q}_p}$ is the norm map from $K$ to $\mathbb{Q}_p$. Therefore, $|\cdot|_p$ will also denote the unique absolute value (or norm) on $K$ extended by the $p$-adic absolute value on $\mathbb{Q}_p$.\par
Denote
$$\mathcal{O}_K=\left\{x\in K:\left|x\right|_p\le1\right\}.$$
$\mathcal{O}_K$ is also a discrete valuation ring, it has a unique nonzero principal maximal ideal $\pi\mathcal{O}_K$ and $\pi$ is called a uniformizer of $K$. $\mathcal{O}_K$ is a free $\mathbb{Z}_p$-module of rank $n$. $\mathcal{O}_K$ is compact and $K$ is locally compact. The unit group of $\mathcal{O}_K$ is
$$\mathcal{O}_K^{\times}= \left\{x\in K:\left|x\right|_p=1\right\}.$$
The residue field $\mathcal{O}_K/\pi\mathcal{O}_K$ is a finite extension of $\mathbb{Z}_p/p\mathbb{Z}_p$. Call the positive integer
$$f=\left[\mathcal{O}_K/\pi\mathcal{O}_K:\mathbb{Z}_p/p\mathbb{Z}_p\right]$$
the residue field degree of $K/\mathbb{Q}_p$. As ideals in $\mathcal{O}_K$, we have $p\mathcal{O}_K=\pi ^e\mathcal{O}_K$. Call the positive integer $e$ the ramification index of $K/\mathbb{Q}_p$. We have $n=\left[K:\mathbb{Q}_p\right]=ef$. When $e=1$, the extension $K/\mathbb{Q}_p$ is unramified, and when $e=n$, $K/\mathbb{Q}_p$ is totally ramified. Each element $x$ of the multiplicative group $K^{\times}$ of nonzero elements of $K$ can be written uniquely as $x=u\pi^t$ with $u\in\mathcal{O}_K^{\times}$ and $t\in\mathbb{Z}$. We have $p=u\pi^e$ with $u\in\mathcal{O}_K^{\times}$, so $\left|\pi\right|_p=p^{-\frac{1}{e}}$.

\subsection{$p$-adic Lattice}

We recall the definition of a $p$-adic lattice.

\begin{definition}[$p$-adic lattice \cite{ref-3}]
Let $V$ be a vector space over $\mathbb{Q}_p$ of finite dimension $n>0$. Let $m$ be a positive integer with $1\le m\le n$. Let $\alpha_1,\alpha_2,\dots,\alpha_m\in V$ be $\mathbb{Q}_p$-linearly independent vectors. A $p$-adic lattice in $V$ is the set
$$\mathcal{L}(\alpha_1,\alpha_2,\dots,\alpha_m):=\left\{\sum^{m}_{i=1}{a_i\alpha_i}:a_i\in\mathbb{Z}_p,1\le i\le m\right\}$$
of all $\mathbb{Z}_p$-linear combinations of $\alpha_1,\alpha_2,\dots,\alpha_m$. The sequence of vectors $\alpha_1,\alpha_2,\dots,\alpha_m$ is called a basis of the lattice $\mathcal{L}(\alpha_1,\alpha_2,\dots,\alpha_m)$. The integers $m$ and $n$ are called the rank and dimension of the lattice, respectively. When $n=m$, we say that the lattice is of full rank.
\end{definition}

We can also define the orthogonal basis of a $p$-adic lattice.

\begin{definition}[orthogonal basis of a $p$-adic lattice \cite{ref-3}]
Let $V$ be nontrivial finite-dimensional vector space over $\mathbb{Q}_p$, and let $\alpha_1,\alpha_2,\dots,\alpha_n\in V$. If $\alpha_1,\alpha_2,\dots,\alpha_n\in V$ form an orthogonal basis of the vector space ${\rm Span_{\mathbb{Q}_p}}(\alpha_1,\alpha_2,\dots,\alpha_n)$, then we say $\alpha_1,\alpha_2,\dots,\alpha_n$ is an orthogonal basis of the lattice $\mathcal{L}(\alpha_1,\alpha_2,\dots,\alpha_n)$.
\end{definition}

\subsection{LVP and CVP}

Deng et al. \cite{ref-2} introduced two new computational problems in $p$-adic lattices. They are the Longest Vector Problem (LVP) and the Closest Vector Problem (CVP). We review them briefly.

\begin{definition}[\cite{ref-2}]
Let $\mathcal{L}=\mathcal{L}(\alpha_1,\alpha_2,\dots,\alpha_m)$ be a lattice in $V$. We define recursively a sequence of positive real numbers $\lambda_1,\lambda_2,\lambda_3,\dots$ as follows.
$$\lambda_1=\max_{1\le i\le m}{\left|\alpha_i\right|_p},$$
$$\lambda_{j+1}=\max{\left\{\left|v\right|_p:v\in\mathcal{L},\left|v\right|_p<\lambda_j\right\}} \mbox{ for } j\ge1.$$
\end{definition}

We have $\lambda_1>\lambda_2>\lambda_3>\dots$ and $\lim_{j\rightarrow\infty}\lambda_j=0$. The Longest Vector Problem is defined as follows.

\begin{definition}[\cite{ref-2}]
Given a lattice $\mathcal{L}=\mathcal{L}(\alpha_1,\alpha_2,\dots,\alpha_m)$ in $V$, the Longest Vector Problem is to find a lattice vector $v\in\mathcal{L}$ such that $\left|v\right|_p=\lambda_2$.
\end{definition}

The Closest Vector Problem is defined as follows.

\begin{definition}[\cite{ref-2}]
Let $\mathcal{L}=\mathcal{L}(\alpha_1,\alpha_2,\dots,\alpha_m)$ be a lattice in $V$ and let $t\in V$ be a target vector. The Closest Vector Problem is to find a lattice vector $v\in\mathcal{L}$ such that $\left|t-v\right|_p=\min_{w\in\mathcal{L}}{\left|t-w\right|_p}$.
\end{definition}

Deng et al. \cite{ref-2} provided deterministic exponential time algorithms to solve the LVP and CVP. Additionally, Zhang et al. \cite{ref-9} presented deterministic polynomial time algorithms for solving the LVP and CVP specifically with the help of orthogonal bases.

\section{Computing the Maximal Order}\label{se-3}

Let $K=\mathbb{Q}_p(\theta)$ be a extension field over $\mathbb{Q}_p$ of degree $n$. We begin with computing the maximal order of $K$.

\begin{definition}[order \cite{ref-1}]
An order $\mathcal{O}$ in $K$ is a subring of $K$ which as a $\mathbb{Z}_p$-module is finitely generated and of maximal rank $n=\deg(K)$.
\end{definition}

The ring of algebraic integers $\mathcal{O}_K$ is called the maximal order. We will build it up by starting from a known order (in fact from $\mathbb{Z}_p[\theta]$) and by successively enlarging it. 

\begin{definition}[$p$-radical \cite{ref-1}]
Let $\mathcal{O}$ be an order in a $p$-adic field $K$. We define the $p$-radical $I_p$ as follows.
$$I_p=\left\{x\in\mathcal{O}\ |\ \exists m\ge1,\ x^m\in p\mathcal{O}\right\}$$ 
\end{definition}

With the help of the $p$-radical, we can enlarging an order $\mathcal{O}$ in a $p$-adic field $K$, as described in the following theorem: 

\begin{theorem}[\cite{ref-1}]\label{th-3.3}
Let $\mathcal{O}$ be an order in a $p$-adic field $K$. Set
$$\mathcal{O}^{\prime}=\left\{x\in K\ |\ xI_p\subset I_p\right\}.$$
Then either $\mathcal{O}^{\prime}=\mathcal{O}$, in which case $\mathcal{O}$ is maximal, or $\mathcal{O}^{\prime}\varsupsetneq\mathcal{O}$ and $p|[\mathcal{O}^{\prime}:\mathcal{O}]|p^n$.
\end{theorem}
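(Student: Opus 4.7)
The plan is to split the proof into three stages. First I will verify that $\mathcal{O}'$ is an order sandwiched between $\mathcal{O}$ and $\mathcal{O}_K$; second, from this the index bound $p\mid [\mathcal{O}':\mathcal{O}]\mid p^n$ will fall out almost for free; third, I will prove the substantive implication that $\mathcal{O}'=\mathcal{O}$ forces $\mathcal{O}$ to be maximal, which I will do by contrapositive.

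For the first stage, $\mathcal{O}'$ is visibly closed under addition and multiplication, contains $1$, and contains $\mathcal{O}$ because $I_p$ is an $\mathcal{O}$-ideal. To see $\mathcal{O}'\subseteq \mathcal{O}_K$, I will use the non-Archimedean norm: for any $x\in\mathcal{O}'$, iterating $xI_p\subseteq I_p$ gives $x^Np\in I_p\subseteq \mathcal{O}_K$ for all $N\ge 1$, hence $|x|_p^N\le p$ for all $N$, forcing $|x|_p\le 1$. For the index bound, note that $p\in I_p$ yields $p\mathcal{O}'\subseteq I_p\subseteq \mathcal{O}$, so $\mathcal{O}'/\mathcal{O}$ is annihilated by $p$ and embeds into $p^{-1}\mathcal{O}/\mathcal{O}\cong \mathcal{O}/p\mathcal{O}$, an $\mathbb{F}_p$-vector space of dimension $n$. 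Therefore $[\mathcal{O}':\mathcal{O}]=p^k$ for some $0\le k\le n$, which gives divisibility by $p^n$ and divisibility by $p$ whenever the inclusion is strict.

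The substantive step is that $\mathcal{O}\subsetneq \mathcal{O}_K$ implies $\mathcal{O}\subsetneq \mathcal{O}'$. My first task here is to identify $I_p$ with the unique maximal ideal $\mathfrak{M}$ of $\mathcal{O}$: because $\mathcal{O}_K$ is the integral closure of $\mathcal{O}$ in $K$ and is a DVR with only the zero and maximal ideal $\mathfrak{m}$, lying-over forces $\mathcal{O}$ to be local with $\mathfrak{M}=\mathfrak{m}\cap\mathcal{O}$, and then $I_p=\sqrt{p\mathcal{O}}=\mathfrak{M}$ since $p\in\mathfrak{M}$. Once this is in place, I will use the conductor $\mathfrak{f}=\{y\in\mathcal{O}_K:y\mathcal{O}_K\subseteq \mathcal{O}\}=\pi^c\mathcal{O}_K$: the hypothesis $\mathcal{O}\ne \mathcal{O}_K$ gives $c\ge 1$, and maximality of $\mathfrak{f}$ among $\mathcal{O}_K$-ideals inside $\mathcal{O}$ implies $\pi^{c-1}\mathcal{O}_K\not\subseteq \mathcal{O}$. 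Picking any $y\in \pi^{c-1}\mathcal{O}_K\setminus \mathcal{O}$, the computations
$$yI_p\subseteq \pi^{c-1}\mathcal{O}_K\cdot\mathfrak{m}=\pi^c\mathcal{O}_K=\mathfrak{f}\subseteq \mathcal{O}\quad\text{and}\quad yI_p\subseteq \mathcal{O}_K\cdot\mathfrak{m}=\mathfrak{m}$$
together yield $yI_p\subseteq \mathfrak{m}\cap\mathcal{O}=I_p$, so $y\in\mathcal{O}'\setminus \mathcal{O}$ as required.

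I expect the main obstacle to be the identification $I_p=\mathfrak{M}$ combined with the conductor construction of $y$: one needs enough commutative algebra (integrality of $\mathcal{O}_K$ over $\mathcal{O}$, lying-over, and the DVR structure of $\mathcal{O}_K$) to know that $\mathcal{O}$ has a unique maximal ideal containing $p$, and the conductor then supplies the smallest handle one can grab outside $\mathcal{O}$ while still preserving the stability property $yI_p\subseteq I_p$.
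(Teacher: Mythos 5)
Your proof is correct, but note that the paper itself offers no proof of this statement: it is quoted verbatim from Cohen \cite{ref-1} (the Pohst--Zassenhaus theorem, stated there for orders in number fields), so there is no internal argument to compare against. Your three stages all check out. The containment $\mathcal{O}\subseteq\mathcal{O}'\subseteq\mathcal{O}_K$ and the index bound via $p\mathcal{O}'\subseteq I_p\subseteq\mathcal{O}$ and the embedding $\mathcal{O}'/\mathcal{O}\hookrightarrow\mathcal{O}/p\mathcal{O}\cong\mathbb{F}_p^n$ are exactly right. The substantive direction is also sound: lying over for the integral extension $\mathcal{O}\subseteq\mathcal{O}_K$ forces $\mathcal{O}$ to be local with maximal ideal $\mathfrak{M}=\mathfrak{m}\cap\mathcal{O}$, whence $I_p=\sqrt{p\mathcal{O}}=\mathfrak{M}$, and the conductor $\mathfrak{f}=\pi^{c}\mathcal{O}_K$ with $c\ge1$ supplies $y\in\pi^{c-1}\mathcal{O}_K\setminus\mathcal{O}$ satisfying $yI_p\subseteq\pi^{c}\mathcal{O}_K\cap\mathfrak{m}\subseteq\mathfrak{m}\cap\mathcal{O}=I_p$. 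What your route buys, compared with Cohen's general argument, is a genuine simplification specific to the local situation: over $\mathbb{Q}_p$ the integral closure $\mathcal{O}_K$ is a discrete valuation ring with a single nonzero prime, so the conductor is a power of $\pi$ and the witness $y$ can be written down directly, whereas Cohen must handle several primes above $p$ and argue through localizations. Two small points worth making explicit if you write this up: (i) $\mathcal{O}'$ is finitely generated over $\mathbb{Z}_p$ because it sits inside $\mathcal{O}_K$ and $\mathbb{Z}_p$ is Noetherian, so it really is an order; (ii) $\mathfrak{f}\neq 0$ because $p^N\mathcal{O}_K\subseteq\mathcal{O}$ for $N$ large, which is what guarantees that the integer $c$ exists.
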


Let $T$ be the minimal polynomial of $\theta$. We can write ${\rm disc}(T)=dp^m$, where $\gcd(d,p)=1$. If $\mathcal{O}_K$ is the maximal order which we are looking for, then the index $[\mathcal{O}_K:\mathbb{Z}[\theta]]$ divides $p^m$. We compute $\mathcal{O}^{\prime}$ as described in Theorem \ref{th-3.3}. If $\mathcal{O}^{\prime}=\mathcal{O}$, then $\mathcal{O}$ is maximal and we are finished. Otherwise, replace $\mathcal{O}$ by $\mathcal{O}^{\prime}$, and use the method of Theorem \ref{th-3.3} again. It is clear that this algorithm is valid and will lead quite rapidly to the maximal order.\par
Now, we explain how to compute $I_p$. It is simpler to compute in $R=\mathcal{O}/p\mathcal{O}$. To compute the $p$-radical of $R$, we need the following lemma: 

\begin{lemma}[\cite{ref-1}]\label{le-3.4}
If $n=[K:\mathbb{Q}_p]$ and if $j\ge1$ is such that $p^j\ge n$, then the $p$-radical of $R$ is equal to the kernel of the map $x\mapsto x^{p^j}$, which is the $j$th power of the Frobenius homomorphism. 
\end{lemma}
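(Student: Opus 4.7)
The plan is to recognize that the $p$-radical of $R=\mathcal{O}/p\mathcal{O}$ coincides with the nilradical $N$ of $R$, and then to establish $N=\ker(\phi)$, where $\phi(x)=x^{p^j}$. Indeed, since $p\mathcal{O}$ maps to $0$ in $R$, the defining condition $x^m\in p\mathcal{O}$ reduces mod $p$ to $x^m=0$, so the $p$-radical of $R$ is literally the set of nilpotent elements. Because $\mathcal{O}$ is a free $\mathbb{Z}_p$-module of rank $n$, the quotient $R$ is a commutative $\mathbb{F}_p$-algebra of dimension $n$; in characteristic $p$ the Frobenius and all its iterates are ring homomorphisms, so $\ker(\phi)$ is an ideal of $R$.

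One containment is routine: if $x^{p^j}=0$ then $x$ is nilpotent, so $\ker(\phi)\subseteq N$.

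For the reverse containment, the key step is to show every nilpotent element $x\in R$ satisfies $x^n=0$, i.e.\ that the nilpotency index is uniformly bounded by $n$. I would argue by viewing multiplication by $x$ as an $\mathbb{F}_p$-linear endomorphism $L_x$ of the $n$-dimensional vector space $R$. Its minimal polynomial $\mu(T)\in\mathbb{F}_p[T]$ has degree at most $n$, and if $x^m=0$ for some $m$ then $L_x^m=0$, forcing $\mu(T)\mid T^m$ and hence $\mu(T)=T^k$ with $k\le n$. Evaluating $\mu(L_x)(1)=0$ yields $x^k=0$, and therefore $x^n=0$. Since $p^j\ge n$, we get $x^{p^j}=x^n\cdot x^{p^j-n}=0$, completing the inclusion $N\subseteq\ker(\phi)$.

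The main obstacle is precisely this uniform nilpotency bound: nilpotency a priori gives only some unspecified exponent $m$, and one must use the $\mathbb{F}_p$-dimension of $R$ to pin it down to $n$. Once this bound is in hand, the Frobenius iterate absorbs it thanks to the hypothesis $p^j\ge n$, and the equality of the $p$-radical with $\ker(x\mapsto x^{p^j})$ follows at once.
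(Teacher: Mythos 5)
Your proof is correct. The paper itself gives no proof of this lemma---it is quoted directly from Cohen's book [ref-1]---and your argument (identify the $p$-radical of $R$ with the nilradical, bound the nilpotency index by $n=\dim_{\mathbb{F}_p}R$ via the minimal polynomial of the multiplication-by-$x$ operator, then absorb that bound using $p^j\ge n$) is precisely the standard argument given in that reference, so there is nothing to add.
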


Let $\omega_1,\dots,\omega_n$ be a basis of $\mathcal{O}$. Then $\overline{\omega}_1,\dots,\overline{\omega}_n$ is an $\mathbb{F}_p$ basis of $R$. For $k=1,\dots,n$, we compute $\overline{a}_{i,k}$ such that
$$\overline{\omega}_k^{p^j}=\sum_{i=1}^n{\overline{a}_{i,k}\overline{\omega}_i},$$
Hence, if $\overline{A}$ is the matrix of the $\overline{a}_{i,k}$, the $p$-radical is simply the kernel of this matrix. We can apply Algorithm 2.3.1 in \cite{ref-1} to obtain a basis of $\overline{I_p}$, the $p$-radical of $R$. Then $I_p$ is generated by pullbacks of a basis of $\overline{I_p}$ and $p\omega_1,\dots,p\omega_n$.\par
Now that we have $I_p$, we must compute $\mathcal{O}^{\prime}$. For this, we use the following lemma:

\begin{lemma}[\cite{ref-1}]\label{le-3.5}
With the notations of Theorem \ref{th-3.3}, if $U$ is the kernel of the map
$$\alpha\mapsto\left(\overline{\beta}\mapsto\overline{\alpha\beta}\right)$$
from $\mathcal{O}$ to ${\rm End}(I_p/pI_p)$, then  $\mathcal{O}^{\prime}=\frac{1}{p}U$.
\end{lemma}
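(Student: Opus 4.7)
The plan is to unpack what it means for $\alpha$ to lie in $U$ and then verify the two inclusions $\frac{1}{p}U \subseteq \mathcal{O}'$ and $\mathcal{O}' \subseteq \frac{1}{p}U$ directly. An element $\alpha \in \mathcal{O}$ belongs to $U$ precisely when multiplication by $\alpha$ kills $I_p/pI_p$, i.e., when $\alpha I_p \subseteq pI_p$. I will also use the auxiliary observation that $p\mathcal{O} \subseteq I_p$, which is immediate from the definition of the $p$-radical, since for every $y \in \mathcal{O}$ the element $py$ satisfies $(py)^1 \in p\mathcal{O}$.

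The inclusion $\frac{1}{p}U \subseteq \mathcal{O}'$ is essentially a formality. If $\alpha \in U$, then $\alpha I_p \subseteq pI_p$; dividing by $p$ inside $K$ gives $\frac{1}{p}\alpha \cdot I_p \subseteq I_p$, so $\frac{1}{p}\alpha \in \mathcal{O}'$ by the definition of $\mathcal{O}'$ in Theorem \ref{th-3.3}.

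For the reverse inclusion, I fix $x \in \mathcal{O}'$ and set $\alpha = px$. Two things must be checked: that $\alpha \in \mathcal{O}$ and that $\alpha \in U$. For the first, I combine $p\mathcal{O} \subseteq I_p$ with $xI_p \subseteq I_p \subseteq \mathcal{O}$ to obtain $(px)\mathcal{O} = x(p\mathcal{O}) \subseteq xI_p \subseteq \mathcal{O}$; evaluating at $1 \in \mathcal{O}$ yields $px \in \mathcal{O}$. For the second, $(px)I_p = p(xI_p) \subseteq pI_p$, so $\alpha \in U$. Hence $x = \frac{1}{p}\alpha \in \frac{1}{p}U$, completing the inclusion.

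No serious obstacle is expected here; everything follows by unwinding definitions. The only step that is not purely formal is the preliminary remark $p\mathcal{O} \subseteq I_p$, and it is precisely this containment that pins down the denominator to be $p$ and makes the clean formula $\mathcal{O}' = \frac{1}{p}U$ correct, rather than a larger fractional ideal.
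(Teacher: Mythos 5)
Your proof is correct. Note that the paper itself does not prove this lemma --- it is quoted from Cohen \cite{ref-1} without proof --- and your argument is the standard one: the identification of $U$ with $\{\alpha\in\mathcal{O}: \alpha I_p\subseteq pI_p\}$, the observation $p\mathcal{O}\subseteq I_p$, and the two inclusions are all verified correctly, so your write-up supplies a complete, self-contained justification consistent with the cited source.
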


Algorithm 2.3.1 in \cite{ref-1} makes sense only over a field, so we must first compute the kernel $\overline{U}$ of the map $\mathcal{O}/p\mathcal{O}$ into ${\rm End}(I_p/pI_p)$. The matrix of the map is an $n^2\times n$ matrix. If $\overline{\gamma}_i$ are a basis of $I_p/pI_p$, one computes
$$\omega_k\overline{\gamma}_i=\sum_{1\le j\le n}{a_{k,i,j}\overline{\gamma}_j},$$
and $k$ is the column number, while $(i,j)$ is the row index. If $\overline{v}_1,\dots,\overline{v}_k$ is the basis of kernel $\overline{U}$, then $U$ is generated by $v_1,\dots,v_k,p\omega_1,\dots,p\omega_n$.

\section{Finding an Orthogonal Basis}\label{se-4}

Now that we obtain a basis of the maximal order $\mathcal{O}_K$. We can find a uniformizer $\pi$ from the basis of the $p$-radical of $\mathcal{O}_K$. It is the element of the maximal $p$-adic absolute value. Then we compute a basis $\overline{s_1},\dots,\overline{s_f}$ of the residue field $k=\mathcal{O}_K/\pi\mathcal{O}_K$ (details are given in Theorem \ref{th-5.1}). Finally, let $e=\frac{n}{f}$. We conclude that $(s_{i}\pi^{j})_{1\le i\le f,\ 0\le j\le e-1}$ is an orthogonal basis of $K$ according to the following results.

\begin{lemma}[\cite{ref-7} page 167, Exercise 5A]\label{le-4.1}
Let $K$ be an extension field of degree $n$ over $\mathbb{Q}_p$. Let $V$ be a subspace of $K$ over $\mathbb{Q}_p$. Assume that $\alpha_{1},\alpha_{2},\dots,\alpha_{m}$ $(m\le n)$ is a basis of $V$ over $\mathbb{Q}_p$ and $\left|\alpha_{1}\right|=\left|\alpha_{2}\right|=\cdots=\left|\alpha_{m}\right|$, which is denoted by $\lambda_{1}$. Let $\pi$ be a uniformizer of $K$, so there is an integer $s$ such that $\left|\pi^{s}\right|=\lambda_{1}$. Then $\alpha_{1},\alpha_{2},\dots,\alpha_{m}$ is an orthogonal basis of $V$ over $\mathbb{Q}_p$ if and only if $\overline{\alpha_{1}},\overline{\alpha_{2}},\dots,\overline{\alpha_{m}}$ are linearly independent over $\mathbb{F}_p$, where $\overline{\alpha_{i}}$ is the image of $\pi^{-s}\cdot\alpha_{i}$ in $k$.
\end{lemma}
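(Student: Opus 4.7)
The plan is to normalize everything by the uniformizer and then translate the orthogonality condition into a non-vanishing statement in the residue field. Set $\beta_i = \pi^{-s}\alpha_i$ for $i = 1,\ldots,m$; each $\beta_i$ has absolute value $1$, so lies in $\mathcal{O}_K^\times$, and its image $\overline{\beta_i} = \overline{\alpha_i}$ is a nonzero element of $k$. Writing a general vector as $v = \sum_{i=1}^m c_i\alpha_i = \pi^s \sum_{i=1}^m c_i\beta_i$ with $c_i \in \mathbb{Q}_p$, we have $|v| = \lambda_1 \cdot |\sum_i c_i\beta_i|$. Since $\alpha_1,\ldots,\alpha_m$ already forms a $\mathbb{Q}_p$-basis of $V$, the orthogonal basis condition reduces to the single identity
$$\Big|\sum_{i=1}^m c_i\beta_i\Big| = \max_{1 \le i \le m}|c_i|_p$$
for every $(c_1,\ldots,c_m) \in \mathbb{Q}_p^m$, and it is this that I will show is equivalent to the $\mathbb{F}_p$-linear independence of $\overline{\beta_1},\ldots,\overline{\beta_m}$.

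For necessity I would argue by contrapositive. If there exist $a_i \in \mathbb{F}_p$, not all zero, with $\sum a_i \overline{\beta_i} = 0$ in $k$, lift each $a_i$ to $\tilde a_i \in \mathbb{Z}_p$ with at least one $\tilde a_i \in \mathbb{Z}_p^\times$, so that $\max_i |\tilde a_i|_p = 1$. Then $\sum \tilde a_i\beta_i$ maps to $0$ in $k$, hence lies in $\pi\mathcal{O}_K$, so $|\sum \tilde a_i\beta_i| \le |\pi| < 1$, whereas the orthogonality identity would demand the value $1$, a contradiction.

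For sufficiency, take any nonzero tuple $(c_1,\ldots,c_m)$ and set $p^{-t} = \max_i |c_i|_p$. Write $c_i = p^t d_i$ with $d_i \in \mathbb{Z}_p$ and at least one $d_i \in \mathbb{Z}_p^\times$. The reduction $\sum \overline{d_i}\,\overline{\beta_i}$ is a nontrivial $\mathbb{F}_p$-linear combination of the $\overline{\beta_i}$, hence nonzero by hypothesis, so $\sum d_i\beta_i \in \mathcal{O}_K \setminus \pi\mathcal{O}_K = \mathcal{O}_K^\times$, giving $|\sum d_i\beta_i| = 1$. Multiplying through by $p^t$ yields $|\sum c_i\beta_i| = p^{-t} = \max_i |c_i|_p$, which is exactly the required identity. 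Translating back via $v = \pi^s \sum c_i\beta_i$ establishes the orthogonal basis condition on $V = \bigoplus_{i=1}^m \operatorname{Span}_{\mathbb{Q}_p}(\alpha_i)$.

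I do not anticipate a serious obstacle: the whole argument is an unwinding of definitions once one accepts that the residue field records precisely the linear dependencies that occur among elements of a common norm. The only place that needs attention is the bookkeeping around the integer $s$ (which may be negative) and the care in selecting lifts from $\mathbb{F}_p$ to $\mathbb{Z}_p$ so as to preserve the property of having maximal $p$-adic absolute value among the coefficients.
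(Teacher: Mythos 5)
Your proof is correct and complete: reducing to the normalized elements $\beta_i=\pi^{-s}\alpha_i$ and showing that $\bigl|\sum_i c_i\beta_i\bigr|=\max_i|c_i|_p$ holds for all $c_i\in\mathbb{Q}_p$ exactly when the residues $\overline{\beta_i}$ are $\mathbb{F}_p$-independent is the standard argument, and both directions (the contrapositive via a unit lift landing in $\pi\mathcal{O}_K$, and the scaling by $p^t$ to reduce to unit coefficients) are carried out without gaps. The paper itself does not prove this lemma but defers to reference \cite{ref-11}; your argument is the expected one and can stand on its own.
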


A proof of Lemma \ref{le-4.1} can be found in \cite{ref-11}.

\begin{lemma}\label{le-4.2}
Let $K$ be an extension field over $\mathbb{Q}_p$. Let $V_{i}\subset K$ be a vector space over $\mathbb{Q}_p$ of finite dimension $n_{i}>0$, $1\le i\le s$. Let $\alpha_{i1},\alpha_{i2},\dots,\alpha_{in_{i}}$ be an orthogonal basis of $V_{i}$ over $\mathbb{Q}_p$. If
$$\left\{\left|v_{i}\right|\Big|v_{i}\in V_{i}\right\}\cap\left\{\left|v_{j}\right|\Big|v_{j}\in V_{j}\right\}=\{0\}$$
for all $1\le i<j\le s$. Then $\alpha_{11},\alpha_{12},\dots,\alpha_{1n_{1}},\dots,\alpha_{s1},\alpha_{s2},\dots,\alpha_{sn_{s}}$ is an orthogonal basis of $V=\bigoplus^{s}_{i=1}V_{i}$ over $\mathbb{Q}_p$.
\begin{proof}
We prove it by induction. If $s=1$, then the statement is obvious. If $s=2$, then by assumption, $V_{1}\cap V_{2}$=\{0\}. Hence $V=V_{1}\oplus V_{2}$ is a direct sum. For any $v\in V$, we have $v=v_{1}+v_{2}$ where $v_{i}=\sum^{n_i}_{j=1}a_{ij}\alpha_{ij}\in V_{i}$, $a_{ij}\in\mathbb{Q}_p$, $i=1,2$. Since
$$\left\{\left|v_{1}\right|\Big|v_{1i}\in V_{1}\right\}\cap\left\{\left|v_{2}\right|\Big|v_{2}\in V_{2}\right\}=\{0\},$$
we have $\left|v_{1}+v_{2}\right|=\max{\{\left|v_{1}\right|,\left|v_{2}\right|\}}$.
Then,
$$\left|v\right|=\left|v_{1}+v_{2}\right|=\max{\{\left|v_{1}\right|,\left|v_{2}\right|\}}=\max_{i,j}\left|a_{ij}\alpha_{ij}\right|.$$
Therefore,
$$\alpha_{11},\alpha_{12},\dots,\alpha_{1n_{1}},\alpha_{21},\alpha_{22},\dots,\alpha_{2n_{2}}$$
is an orthogonal basis of $V$ over $\mathbb{Q}_p$.\par
Suppose that it holds for $s=k\ge2$. For $s=k+1$, applying the induction assumption to $V_{1},V_{2},\dots,V_{k}$, we conclude that
$$\alpha_{11},\alpha_{12},\dots,\alpha_{1n_{1}},\dots,\alpha_{k1},\alpha_{k2},\dots,\alpha_{kn_{k}}$$
is an orthogonal basis of $V^{\prime}=\bigoplus^{k}_{i=1}V_{i}$ over $\mathbb{Q}_p$. Then, for any $v^{\prime}\in V^{\prime}$, we have 
$$v^{\prime}=\sum^{k}_{i=1}v_{i},$$
where $v_{i}\in V_{i}$ for $1\le i\le k$, and
$$\left|v^{\prime}\right|=\max_{1\le i\le k}{\left|v_{i}\right|}\in\bigcup^{k}_{i=1}\{\left|v_{i}\right||v_{i}\in V_{i}\}.$$
Therefore,
$$\left\{\left|v^{\prime}\right|\Big|v^{\prime}\in V^{\prime}\right\}\cap\left\{\left|v_{s}\right|\Big|v_{s}\in V_{s}\right\}=\{0\}.$$
Applying the induction assumption to $V^{\prime}$ and $V_{s}$, we conclude that
$$\alpha_{11},\alpha_{12},\dots,\alpha_{1n_{1}},\dots,\alpha_{s1},\alpha_{s2},\dots,\alpha_{sn_{s}}$$
is an orthogonal basis of $V=V^{\prime}\oplus V_{s}=\bigoplus^{s}_{i=1}V_{i}$ over $\mathbb{Q}_p$.
\end{proof}
\end{lemma}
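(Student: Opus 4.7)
The plan is to proceed by induction on $s$, since the conclusion naturally decomposes into a two-summand version iterated $s-1$ times. The case $s=1$ is immediate from the hypothesis that $\alpha_{i1},\dots,\alpha_{in_i}$ is already an orthogonal basis of $V_i$.

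For the base case $s=2$, the key observation is that the disjointness condition $\{|v_1|: v_1\in V_1\}\cap\{|v_2|: v_2\in V_2\}=\{0\}$ forces $|v_1|\ne|v_2|$ whenever both $v_1\in V_1$ and $v_2\in V_2$ are nonzero. The ultrametric equality (the ``If $|v|\ne|w|$\dots'' statement after the three norm axioms in Section~2.1) then gives $|v_1+v_2|=\max\{|v_1|,|v_2|\}$. In particular this shows $V_1\cap V_2=\{0\}$, so $V_1+V_2$ is a direct sum. Writing an arbitrary $v\in V_1\oplus V_2$ as $v=v_1+v_2$ with $v_i=\sum_j a_{ij}\alpha_{ij}$ and using the orthogonal basis property of each $V_i$ to rewrite $|v_i|=\max_j|a_{ij}\alpha_{ij}|$, I obtain $|v|=\max_{i,j}|a_{ij}\alpha_{ij}|$, which is exactly the definition of an orthogonal basis.

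For the inductive step from $s=k$ to $s=k+1$, the strategy is to bundle the first $k$ spaces into $V'=\bigoplus_{i=1}^k V_i$ (which has an orthogonal basis by induction) and then apply the $s=2$ case to the pair $(V',V_{k+1})$. To invoke the $s=2$ case I must verify the disjointness hypothesis for $V'$ versus $V_{k+1}$. This is the one nonformal step in the argument and the main thing to get right: for any $v'\in V'$ written as $v'=\sum_{i=1}^k v_i$ with $v_i\in V_i$, the orthogonal basis property of $V'$ yields $|v'|=\max_i|v_i|$, and this maximum is actually attained at some index $i_0$, hence $|v'|=|v_{i_0}|\in\{|w|:w\in V_{i_0}\}$. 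Thus every nonzero norm realized in $V'$ is a nonzero norm realized in one of the $V_i$ ($i\le k$), each of which is disjoint from the norms realized in $V_{k+1}$ by hypothesis. Therefore the disjointness transfers to $(V',V_{k+1})$ and the $s=2$ case finishes the induction.

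The only genuinely delicate point is the transfer of the disjointness hypothesis from the individual $V_i$'s to the aggregated space $V'$; everything else is either the inductive bookkeeping or a direct invocation of the non-Archimedean equality $|x+y|=\max\{|x|,|y|\}$ when $|x|\ne|y|$. No extra hypotheses (such as equality of norms among the basis vectors, as in Lemma~\ref{le-4.1}) are needed, since orthogonality is preserved under regrouping once the norms of the summands are pairwise distinct.
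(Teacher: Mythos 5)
Your proposal is correct and follows essentially the same route as the paper's own proof: induction on $s$, with the $s=2$ case handled via the ultrametric equality for summands of distinct norms, and the inductive step reducing to the two-space case by observing that $|v'|=\max_i|v_i|$ is attained in some $V_{i_0}$, so the disjointness of norm sets transfers to the pair $(V',V_{k+1})$. Your explicit justification that the maximum is attained (and your derivation of $V_1\cap V_2=\{0\}$ from the norm condition) is, if anything, slightly more careful than the paper's write-up.
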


\begin{theorem}\label{th-4.3}
Let $K$ be an extension field of degree $n$ over $\mathbb{Q}_p$. Its residue degree is $f$ and ramification index is $e$. Let $\pi$ be a uniformizer of $K$ and $(s_{i})_{1\le i\le f}$ be a family in $\mathcal{O}_K$ such that the image $\overline{s_{i}}\in k$ make up a basis of $k$ over $\mathbb{F}_p$. Then the family
$$(s_{i}\pi^{j})_{1\le i\le f,\ 0\le j\le e-1}$$
is an orthogonal basis of $K$ over $\mathbb{Q}_p$.
\begin{proof}
Since $n=ef$. We can prove that the elements in this family are linearly independent over $\mathbb{Q}_p$ (see  \cite{ref-4} page 99), so it is a basis of $K$ over $\mathbb{Q}_p$. Let $V_{j}$ be the vector space generated by $(s_i\pi^j)_{1\le i\le f}$ over $\mathbb{Q}_p$, $0\le j\le e-1$. Then $K=\bigoplus^{e-1}_{j=0}V_{j}$. Since $(\overline{s_{i}})_{1\le i\le f}$ are linearly independent over $\mathbb{F}_p$, by Lemma \ref{le-4.1}, $(s_i\pi^j)_{1\le i\le f}$ is an orthogonal basis of $V_{j}$ over $\mathbb{Q}_p$. Since $\left|\pi\right|=p^{-\frac{1}{e}}$, we have
$$\{\left|v_{j}\right||v_{j}\in V_{j}\}=\{0\}\cup p^{\mathbb{Z}-\frac{j}{e}}.$$
Then by Lemma \ref{le-4.2}, the family
$$(s_{i}\pi^{j})_{1\le i\le f,\ 0\le j\le e-1}$$
is an orthogonal basis of $K$ over $\mathbb{Q}_p$.
\end{proof}
\end{theorem}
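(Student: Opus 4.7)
The plan is to assemble Theorem \ref{th-4.3} from Lemmas \ref{le-4.1} and \ref{le-4.2} by treating the powers of $\pi$ as separating ``layers'' and then gluing them. Concretely, for each $j$ with $0 \le j \le e-1$, I would set
$$V_j = \mathrm{Span}_{\mathbb{Q}_p}(s_1\pi^j, s_2\pi^j,\dots, s_f\pi^j),$$
and proceed in three steps: (i) check that the full family is a $\mathbb{Q}_p$-basis of $K$; (ii) show each $V_j$ has $(s_i\pi^j)_{1\le i\le f}$ as an orthogonal basis; (iii) show the $V_j$'s are pairwise orthogonal in the sense of Lemma \ref{le-4.2}.

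For (i), since $\#\{(i,j)\} = ef = n = \dim_{\mathbb{Q}_p} K$, it suffices to check linear independence, which is the standard fact cited from \cite{ref-4}. For (ii), I would apply Lemma \ref{le-4.1} to $V_j$. Each $s_i$ reduces to a nonzero element of the residue field, so $|s_i|_p = 1$ and therefore $|s_i\pi^j|_p = |\pi|_p^{\,j} = p^{-j/e}$. Thus all $f$ vectors share the common norm $\lambda_1 = p^{-j/e}$, and one may take $s = j$ in the notation of Lemma \ref{le-4.1}; the rescaled reductions $\overline{\pi^{-j}\cdot s_i\pi^j} = \overline{s_i}$ are linearly independent over $\mathbb{F}_p$ by hypothesis, so Lemma \ref{le-4.1} gives orthogonality of the basis of $V_j$.

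For (iii), I would compute the value sets $\{|v|_p : v \in V_j\}$. Because $(s_i\pi^j)_i$ is already an orthogonal basis of $V_j$ with all basis vectors of norm $p^{-j/e}$, any $v_j = \sum_i a_i s_i\pi^j \in V_j$ satisfies $|v_j|_p = (\max_i |a_i|_p)\cdot p^{-j/e}$, which lies in $\{0\}\cup p^{\mathbb{Z} - j/e}$. For distinct $j, j' \in \{0,1,\dots,e-1\}$ the shifted cosets $\mathbb{Z} - j/e$ and $\mathbb{Z} - j'/e$ are disjoint in $\tfrac{1}{e}\mathbb{Z}$, so the nonzero value sets of $V_j$ and $V_{j'}$ are disjoint. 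This is exactly the hypothesis of Lemma \ref{le-4.2} (intersection of value sets equals $\{0\}$), and the lemma assembles the individual orthogonal bases of $V_0,\dots,V_{e-1}$ into an orthogonal basis of $K = \bigoplus_{j=0}^{e-1} V_j$.

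The only genuinely load-bearing step is verifying the norm-set disjointness in (iii); everything else is a direct invocation of the preceding lemmas. I expect no hidden difficulty there because the valuation group $|K^\times|_p = p^{\frac{1}{e}\mathbb{Z}}$ is cyclically generated by $|\pi|_p$, so the cosets modulo $|\mathbb{Q}_p^\times|_p = p^{\mathbb{Z}}$ are indexed precisely by $j \bmod e$. A minor care point will be phrasing the value sets so that the hypothesis of Lemma \ref{le-4.2} is literally what is used (the lemma, as stated, compares $\{|v_i|\}\cap\{|v_j|\}$ to $\{0\}$, so I must remember to keep $0$ in both sets when making the comparison).
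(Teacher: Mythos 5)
Your proposal is correct and follows essentially the same route as the paper's own proof: the same decomposition $K=\bigoplus_{j=0}^{e-1}V_j$, Lemma \ref{le-4.1} applied to each layer, and Lemma \ref{le-4.2} to glue them via the disjointness of the value sets $\{0\}\cup p^{\mathbb{Z}-j/e}$. In fact you supply slightly more detail than the paper does (e.g.\ noting $|s_i|_p=1$ and justifying the disjointness of the cosets $\mathbb{Z}-j/e$), but the argument is the same.
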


\section{Solving the LVP and CVP with Orthogonal Bases}\label{se-5}

Once we obtain an orthogonal basis of $K$, we can use the following algorithms to solve the LVP and CVP in $K$.

\begin{algorithm}[H]
\caption{(LVP with orthogonal bases \cite{ref-9})}
\begin{algorithmic}[1]
\REQUIRE an orthogonal basis $e_1,\dots,e_n$ of $K$, a $p$-adic lattice $\mathcal{L}=\mathcal{L}(\alpha_1,\dots,\alpha_m)$ in $K$
\ENSURE a (second) longest vector of $\mathcal{L}$
\FOR {$i=1$ to $m$}
	\STATE rearrange $\alpha_i,\dots,\alpha_m$ such that  $\left|\alpha_i\right|_p=\max_{i\le k\le m}{\left|\alpha_k\right|_p}$
	\IF {$i>1$ and $\left|\alpha_{i-1}\right|_p>\left|\alpha_i\right|_p$}
		\STATE break
	\ENDIF
	\STATE rearrange $e_i,\dots,e_n$ such that  $\left|a_{ii}e_i\right|_p=\max_{i\le j\le m}{\left|a_{ij}e_j\right|_p}$
	\FOR {$l=i+1$ to $m$}
		\STATE $\alpha_l\leftarrow\alpha_l-\frac{a_{li}}{a_{ii}}\alpha_i$
	\ENDFOR
\ENDFOR
\IF {$\left|p\alpha_1\right|_p>\left|\alpha_i\right|_p$}
	\STATE $v\leftarrow p\alpha_1$
\ELSE
	\STATE $v\leftarrow\alpha_i$
\ENDIF
\RETURN $v$
\end{algorithmic}
\end{algorithm}

This algorithm runs in polynomial time in the input size if we can compute efficiently the norm $\left|v\right|_p$ of any vector $v\in V$.

\begin{algorithm}[H]
\caption{(CVP with orthogonal bases \cite{ref-9})}
\begin{algorithmic}[1]
\REQUIRE an orthogonal basis $e_1,\dots,e_n$ of $K$, a $p$-adic lattice $\mathcal{L}=\mathcal{L}(\alpha_1,\dots,\alpha_m)$ in $K$, a target vector $t\in K$
\ENSURE a closest lattice vector $v$ of $t$
\STATE $v\leftarrow0$, $s\leftarrow t$, write $s=\sum^{n}_{j=1}{s_je_j}$
\FOR {$i=1$ to $m$}
	\STATE rearrange $\alpha_i,\dots,\alpha_m$ such that  $\left|\alpha_i\right|_p=\max_{i\le k\le m}{\left|\alpha_k\right|_p}$
	\IF {$\left|s\right|_p>\left|\alpha_i\right|_p$}
		\STATE break
	\ENDIF
	\STATE rearrange $e_i,\dots,e_n$ such that  $\left|a_{ii}e_i\right|_p=\max_{i\le j\le m}{\left|a_{ij}e_j\right|_p}$
	\STATE $s\leftarrow s-\frac{s_i}{a_{ii}}\alpha_i$, $v\leftarrow v+\frac{s_i}{a_{ii}}\alpha_i$
	\IF {$s=0$}
		\STATE break
	\ENDIF
	\FOR {$l=i+1$ to $m$}
		\STATE $\alpha_l\leftarrow\alpha_l-\frac{a_{li}}{a_{ii}}\alpha_i$
	\ENDFOR
\ENDFOR
\RETURN $v$
\end{algorithmic}
\end{algorithm}

This algorithm also runs in polynomial time in the input size if we can compute efficiently the norm $\left|v\right|_p$ of any vector $v\in K$.\par
Finally, we summrize the progress of solving the LVP and CVP in $p$-adic fields as the following theorem.

\begin{theorem}\label{th-5.1}
Let $K$ be an extension field of degree $n$ over $\mathbb{Q}_p$. Then there exists a deterministic algorithm to solve the LVP and CVP of any $p$-adic lattice $\mathcal{L}$ in $K$. The time complexity of this algorithm is polynomial in $n$ and $\log p$.
\end{theorem}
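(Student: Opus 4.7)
The plan is to assemble a single deterministic procedure from the building blocks developed in Sections 3--5 and check that each block runs in time polynomial in $n$ and $\log p$. Given the minimal polynomial $T$ of $\theta$ with $K = \mathbb{Q}_p(\theta)$, I would first compute $\mathcal{O}_K$ by the iterated enlargement scheme of Theorem \ref{th-3.3}: starting from $\mathcal{O} = \mathbb{Z}_p[\theta]$, repeatedly form the $p$-radical $I_p$ via Lemma \ref{le-3.4} (a kernel computation for a power of Frobenius on $R = \mathcal{O}/p\mathcal{O}$, which is an $\mathbb{F}_p$-vector space of dimension $n$) and then form $\mathcal{O}' = \{x \in K : x I_p \subset I_p\}$ via Lemma \ref{le-3.5}. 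Each enlargement step is a bounded number of linear-algebra operations over $\mathbb{F}_p$ on matrices of size at most $n^2 \times n$, and since $p \mid [\mathcal{O}' : \mathcal{O}] \mid p^n$, the index strictly drops by a factor that is a power of $p$ at every non-terminal step, so the loop terminates after at most $v_p(\mathrm{disc}(T)) \le O(n \log \|T\|)$ rounds.

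Once $\mathcal{O}_K$ is in hand, I would extract a uniformizer $\pi$ by examining the basis of the final $p$-radical and selecting the element of maximal $p$-adic absolute value; norms can be computed via the characteristic polynomial over $\mathbb{Q}_p$ of multiplication-by-$x$, which is polynomial-time. Next I would produce representatives $s_1, \dots, s_f \in \mathcal{O}_K$ whose images in $k = \mathcal{O}_K/\pi\mathcal{O}_K$ form an $\mathbb{F}_p$-basis: reduce the basis of $\mathcal{O}_K$ modulo $\pi\mathcal{O}_K$ and perform Gaussian elimination over $\mathbb{F}_p$ to pick out $f$ independent vectors. Applying Theorem \ref{th-4.3} then yields the orthogonal basis $(s_i \pi^j)_{1 \le i \le f,\ 0 \le j \le e-1}$ of $K$ over $\mathbb{Q}_p$.

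With the orthogonal basis of the ambient space $K$ available, I would feed it together with the input lattice $\mathcal{L} = \mathcal{L}(\alpha_1, \dots, \alpha_m)$ (and, for CVP, the target $t$) into Algorithms 1 and 2 of Section 5, whose correctness is already cited from \cite{ref-9}. The outer loops of both algorithms run $O(m) = O(n)$ iterations, each one performing $O(nm)$ arithmetic operations in $K$ plus $O(nm)$ norm evaluations; since every element is represented by $n$ coefficients in $\mathbb{Q}_p$ and norms reduce to characteristic-polynomial computations, the total cost is polynomial in $n$ and $\log p$ given the precision implicit in the input.

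The main obstacle is the bookkeeping of $p$-adic precision throughout the maximal-order computation and the two lattice algorithms: the Round 2 / Round 4 routines and the divisions $\alpha_l \leftarrow \alpha_l - (a_{li}/a_{ii}) \alpha_i$ inside Algorithms 1--2 can in principle lose precision, so one must show that working to precision polynomial in the input precision suffices to identify $\mathcal{O}_K$, recover $\pi$, and decide the strict inequalities $|s|_p > |\alpha_i|_p$ correctly. I would handle this by bounding valuations of denominators that appear (each is controlled by $v_p(\mathrm{disc}(T))$ or by the valuations of the $a_{ij}$ in the orthogonal expansion of the $\alpha_i$) and invoking the standard lifting argument that approximations mod $p^N$ for $N$ polynomial in $n$ and the input size are enough. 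Combining these estimates with the polynomial bounds of the preceding paragraphs gives the claimed deterministic algorithm of complexity polynomial in $n$ and $\log p$.
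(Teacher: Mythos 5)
Your proposal follows the same overall architecture as the paper's proof: compute $\mathcal{O}_K$ by iterating Theorem \ref{th-3.3}, read off a uniformizer $\pi$ as the element of maximal absolute value in a basis of the $p$-radical, obtain representatives $s_1,\dots,s_f$ of an $\mathbb{F}_p$-basis of the residue field, invoke Theorem \ref{th-4.3} to get the orthogonal basis $(s_i\pi^j)$, and then run Algorithms 1 and 2. The one step where you genuinely diverge is the construction of $s_1,\dots,s_f$: the paper realizes the unramified subextension of degree $f$ by searching random monic polynomials over $\mathbb{F}_p$ until an irreducible one is found and then takes an integral basis of that extension, whereas you reduce the integral basis of $\mathcal{O}_K$ modulo $\pi\mathcal{O}_K$ and extract $f$ independent images by Gaussian elimination over $\mathbb{F}_p$. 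Your variant is actually preferable on two counts: it is honestly deterministic (the paper's random search for an irreducible polynomial is a Las Vegas step that sits uneasily with the theorem's claim of a deterministic algorithm), and it avoids the unaddressed issue of computing an embedding of the abstract unramified extension into $K$. Since Theorem \ref{th-4.3} only requires elements of $\mathcal{O}_K$ whose images form a basis of $k$, your direct linear-algebra construction suffices. You also add a precision analysis that the paper omits entirely; the paper simply asserts that "the subroutines are all polynomial time." Two small points you state without justification but which are true and worth a line: that the maximal-absolute-value element of a basis of the $p$-radical is indeed a uniformizer (this follows because $I_p=\pi\mathcal{O}_K$ and the ultrametric inequality forces some basis element to attain the maximal value $p^{-1/e}$), and that the final $p$-radical must be recomputed for $\mathcal{O}_K$ rather than reused from an intermediate order.
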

\begin{proof}
Let $e$ be the ramfication index and $f$ be the residue degree. First, we use Theorem \ref{th-3.3} to compute the maximal order $\mathcal{O}_K$. We also use Lemma \ref{le-3.4} to compute a basis of $\left(\mathcal{O}_K\right)_p$, the $p$-radical of $\mathcal{O}_K$. Let $\pi$ be the element of the maximal $p$-adic absolute value in this basis. We conclude that $\pi$ is a uniformizer, i.e., $|\pi|_p=p^{-\frac{1}{e}}$. Otherwise, there is no vectors of $p$-adic absolute value $p^{-\frac{1}{e}}$ in $\left(\mathcal{O}_K\right)_p$. However, any uniformizer belongs to $\left(\mathcal{O}_K\right)_p$ and is of $p$-adic absolute value $p^{-\frac{1}{e}}$. This is a contradiction.\par
Then, since there exists a unique unramified extension of degree $f$ over $\mathbb{Q}_p$. According to \cite{ref-8}, to find  a polynomial generating this extension, we look at random monic polynomials of degree $f$ over $\mathbb{F}_p=\mathbb{Q}_p/p\mathbb{Q}_p$ until we find an irreducible one. Let $s_1,\dots,s_f$ be an integral basis of this extension. We conclude that $\overline{s_1},\dots,\overline{s_f}$ is a basis of the residue field $k=\mathcal{O}_K/\pi\mathcal{O}_K$. By Theorem \ref{th-4.3}, $(s_{i}\pi^{j})_{1\le i\le f,\ 0\le j\le e-1}$ is an orthogonal basis of $K$.\par
Finally, we can use algorithms in Section \ref{se-5} to solve the LVP and CVP of any $p$-adic lattice $\mathcal{L}$ in $K$.\par
Since the subroutines are all polynomial time algorithms in $n$ and $\log p$, this is a polynomial time algorithm in $n$ and $\log p$.
\end{proof}

\section{A Toy Example}\label{se-6}

Let $K=\mathbb{Q}_2(\theta)$ be a extension field over $\mathbb{Q}_2$ where $\theta$ is a root of $F(X)=X^3-3X^2+3X-5$. To solve the LVP and CVP in $K$, we need to find an orthogonal basis of $K$. We begin with $\mathcal{O}=\mathbb{Z}_2[\theta]$. Choose $1,\theta,\theta^2$ as a basis of $\mathcal{O}$.\par
First, Let us compute the $2$-radical $I_2$. Since $[K:\mathbb{Q}_2]=3$, we can take $j=2$. Then we compute the matrix $\overline{A}$ of the map $x\mapsto x^4$
$$\overline{A}=
\begin{pmatrix}
 1 & 1 & 1\\
 0 & 0 & 0\\
 0 & 0 & 0
\end{pmatrix}.$$
Hence, $\overline{\theta-1},\overline{\theta^2-1}$ is a basis of $\overline{I_2}$, and $2,\theta-1,\theta^2-1$ is a basis of $I_2$.\par
Next, Let us compute $\mathcal{O}^{\prime}$. Since $\overline{2},\overline{\theta-1},\overline{\theta^2-1}$ is a basis of $I_2/2I_2$, the matrix of the map
$$\alpha\mapsto\left(\overline{\beta}\mapsto\overline{\alpha\beta}\right)$$
is
$$\begin{pmatrix}
 1 & 1 & 1\\
 0 & 0 & 0\\
 0 & 0 & 0\\
 0 & 0 & 0\\
 1 & 1 & 1\\
 0 & 1 & 0\\
 0 & 0 & 0\\
 0 & 0 & 0\\
 1 & 1 & 1
\end{pmatrix}.$$
Hence, $\overline{\theta^2-1}$ is a basis of its kernel $\overline{U}$, and $2,2\theta,\theta^2-1$ is a basis of $U$. Therefore, $\mathcal{O}^{\prime}=\frac{1}{2}U$ is generated by $1,\theta,\frac{1}{2}(\theta^2-1)$.\par
In order to determine whether $\mathcal{O}^{\prime}$ is maximal, we need another round. Choose $1,\theta,\frac{1}{2}(\theta^2-1)$ as a basis of $\mathcal{O}^{\prime}$. The matrix $\overline{A^{\prime}}$ of the map $x\mapsto x^4$ is
$$\overline{A^{\prime}}=
\begin{pmatrix}
 1 & 1 & 0\\
 0 & 0 & 0\\
 0 & 0 & 0
\end{pmatrix}.$$
Hence, $\overline{\theta-1},\overline{\frac{1}{2}(\theta^2-1)}$ is a basis of $\overline{I^{\prime}_2}$, and $2,\theta-1,\frac{1}{2}(\theta^2-1)$ is a basis of $I_2^{\prime}$.\par
Since $\overline{2},\overline{\theta-1},\overline{\frac{1}{2}(\theta^2-1)}$ is a basis of $I_2^{\prime}/2I_2^{\prime}$, the matrix of the map $\alpha\mapsto\left(\overline{\beta}\mapsto\overline{\alpha\beta}\right)$ is
$$\begin{pmatrix}
 1 & 1 & 0\\
 0 & 0 & 0\\
 0 & 0 & 0\\
 0 & 0 & 1\\
 1 & 1 & 0\\
 0 & 0 & 0\\
 0 & 1 & 1\\
 0 & 0 & 1\\
 1 & 1 & 0
\end{pmatrix}.$$
Hence, its kernel $\overline{U^{\prime}}=\left\{\overline{0}\right\}$, and $2,2\theta,\theta^2-1$ is a basis of $U^{\prime}$. Therefore, $\mathcal{O}^{\prime\prime}=\frac{1}{2}U^{\prime}=\mathcal{O}^{\prime}$. We conclude that $\mathcal{O}^{\prime}$ is the maximal order.\par
Now, let us find an orthogonal basis of $K$. Since $\left|2\right|_p=2^{-1}$, $\left|\theta-1\right|_p=2^{-\frac{2}{3}}$ and $\left|\frac{1}{2}(\theta^2-1)\right|_p=2^{-\frac{1}{3}}$, we find a uniformizer $\pi=\frac{1}{2}(\theta^2-1)$ in $I_2^{\prime}$. The residue field $k=\mathcal{O}^{\prime}/\pi\mathcal{O}^{\prime}$ is generated by $1$. Therefore, $1,\pi,\pi^2$ is an orthogonal basis of $K$.\par
Once we obtain an orthogonal basis of $K$, we can use algorithms in section \ref{se-5} to solve the LVP and CVP in $K$. Examples of these algorithms are given in \cite{ref-9}.

\section{Characterization of Norms on Vector Spaces over $\mathbb{Q}_p$}

Since the norm given by the $p$-adic absolute value of a $p$-adic field is insecure, we need to consider other methods to give a computable norm. The following theorem characterizes norms on vector spaces over $\mathbb{Q}_p$. This theorem may be well known, though we could not find any references. For the sake of completeness, we provide a proof here.\par

\begin{theorem}\label{th-7.1}
Let $V$ be a vector space over $\mathbb{Q}_p$ of finite dimension $n>0$, which is given by basis $\alpha_1,\alpha_2,\dots,\alpha_n$. Then every norm $N$ on $V$ is of form
$$N\left(\sum_{i=1}^{n}{b_{i}\alpha_{i}}\right)=\max_{1\le j\le n}{c_j\left|\sum_{i=1}^{n}b_{i}a_{ij}\right|_p},$$
where $b_1,b_2,\dots,b_n\in\mathbb{Q}_p$, $c_1,c_2,\dots,c_n\in\mathbb{R}_{+}$ and $(a_{ij})_{n\times n}\in{\rm GL}_n(\mathbb{Q}_p)$. In other words, every norm on $V$ can be determined by $n$ positive real numbers and an $n\times n$ invertible matrix over $\mathbb{Q}_p$.
\end{theorem}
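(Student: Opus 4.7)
The plan is to reduce to Weil's proposition (Proposition 2.1) by producing an orthogonal decomposition for $N$ and then tracking the change of coordinates from the orthogonal basis it supplies back to the given basis $\alpha_1,\dots,\alpha_n$.

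First I would apply Proposition 2.1 to the normed space $(V,N)$ to obtain a direct sum decomposition $V = V_1 \oplus \cdots \oplus V_n$ into one-dimensional subspaces along which $N$ is ``orthogonal'' in the sense of the second bullet of Definition 2.2. Pick a generator $\beta_j$ of each $V_j$ and set $c_j := N(\beta_j) \in \mathbb{R}_{+}$. For any $x \in \mathbb{Q}_p$ the homogeneity axiom yields $N(x \beta_j) = c_j |x|_p$, so Weil's decomposition reads, on the $\beta$-coordinates,
$$N\!\left(\sum_{j=1}^n x_j \beta_j\right) = \max_{1 \le j \le n} c_j |x_j|_p.$$

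Second, I would change basis from $\{\beta_j\}$ back to the prescribed $\{\alpha_i\}$. Since $\{\beta_1,\dots,\beta_n\}$ is also a $\mathbb{Q}_p$-basis of $V$, each input basis vector expands uniquely as $\alpha_i = \sum_{j=1}^n a_{ij} \beta_j$ for some $a_{ij} \in \mathbb{Q}_p$, and because both families are bases the matrix $A = (a_{ij})$ lies in $\mathrm{GL}_n(\mathbb{Q}_p)$. For a generic $v = \sum_i b_i \alpha_i$, reorganizing the double sum gives
$$v = \sum_{j=1}^n \left(\sum_{i=1}^n b_i a_{ij}\right) \beta_j,$$
and substituting into the previous display produces
$$N(v) = \max_{1 \le j \le n} c_j \left| \sum_{i=1}^n b_i a_{ij} \right|_p,$$
which is exactly the asserted form.

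The proof is essentially a transcription of Weil's decomposition into the coordinates fixed by the input basis, so the only genuine content lies in Proposition 2.1, which is quoted from \cite{ref-6}. The main (minor) point to check carefully is that the transition matrix $A$ is indeed invertible, which is immediate from the fact that both $\{\alpha_i\}$ and $\{\beta_j\}$ are $\mathbb{Q}_p$-bases of $V$. For completeness one can also verify the converse—that any formula of this shape satisfies the three axioms of a norm—but that direction is a routine check and is not claimed by the theorem statement.
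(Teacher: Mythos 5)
Your proposal is correct and follows essentially the same route as the paper: obtain an orthogonal basis for $(V,N)$ (the paper cites Main Theorem 1 of \cite{ref-9}, you invoke Weil's Proposition 2.1, which is the same underlying fact), set $c_j$ equal to the norm of the $j$th orthogonal basis vector, and express the $\alpha_i$ in terms of that basis via an invertible transition matrix $A$ to read off the stated formula. No substantive difference.
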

\begin{proof}
According to Proposition \ref{pr-2.1}, there is an orthogonal basis $e_1,e_2,\dots,e_n$ of $V$. Denote $N(e_i)=c_i\in\mathbb{R}$ for $1\le i\le n$ and
$$\begin{pmatrix}
 \alpha_1 \\
 \alpha_2 \\
 \vdots \\
 \alpha_n
\end{pmatrix}=A\cdot\begin{pmatrix}
 e_1 \\
 e_2 \\
 \vdots \\
 e_n
\end{pmatrix}$$
for some invertible matrix $A=(a_{ij})_{n\times n}\in{\rm GL}_n(\mathbb{Q}_p)$. For any vector $v=\sum_{i=1}^{n}b_i\alpha_i\in V$, we have
$$v=(b_1,b_2,\dots,b_n)\cdot\begin{pmatrix}
 \alpha_1 \\
 \alpha_2 \\
 \vdots \\
 \alpha_n
\end{pmatrix}=(b_1,b_2,\dots,b_n)\cdot A\cdot\begin{pmatrix}
 e_1 \\
 e_2 \\
 \vdots \\
 e_n
\end{pmatrix}.$$

Therefore,
$$N(v)=\max_{1\le j\le n}{N\left(\sum_{i=1}^{n}b_{i}a_{ij}e_j\right)}=\max_{1\le j\le n}{c_j\left|\sum_{i=1}^{n}b_{i}a_{ij}\right|_p}.$$
\end{proof}

If a norm is given as in Theorem \ref{th-7.1}, then it will be easy to compute. However, it is insecure, since we can find an orthogonal basis by the matrix $A$. Specifically, we have
$$\begin{pmatrix}
 e_1 \\
 e_2 \\
 \vdots \\
 e_n
\end{pmatrix}=A^{-1}\cdot\begin{pmatrix}
 \alpha_1 \\
 \alpha_2 \\
 \vdots \\
 \alpha_n
\end{pmatrix}.$$

\section{Conclusion}\label{se-7}

We propose a deterministic polynomial time algorithm to solve the LVP and CVP in $p$-adic fields. The previous work \cite{ref-10} attacked schemes under specific conditions, while the algorithm in this paper is a general attack for the case where the field is fully specified. Therefore, the public-key cryptosystems or signature schemes in \cite{ref-3.5} are insecure. Although the method in \cite{ref-11} for constructing orthogonal bases with large residue degree aimed to patch schemes, the current work shows the fundamental vulnerability remains if the field is known.\par
However, there is still room for improvement. For instance, instead of giving the whole $p$-adic field by a minimal polynomial, we could only give a way to compute the $p$-adic absolute value. In other words, we could consider public-key cryptosystems and signature schemes in vector spaces over $\mathbb{Q}_p$, since there are no known deterministic polynomial time algorithms to solve the LVP or CVP in vector spaces over $\mathbb{Q}_p$ when the norm is given as an oracle. The crucial point is to give a computable norm, while do not leak any orthogonal bases. On the other hand, we could ask that given an oracle of a norm, are there any polynomial time algorithms to find $c_1,c_2,\dots,c_n$ and $A$ in Theorem \ref{th-7.1}.

\section*{Acknowledgements}

This work was supported by National Natural Science Foundation of China(No. 12271517) and National Key R\&D Program of China(No. 2025YFA1017203). The idea of computing the maximal order comes from an anonymous referee. We extend our sincere appreciation to the referee.

%

\end{document}